\DeclareMathOperator{\co}{co}
\DeclareMathOperator{\supp}{supp}
\newtheorem{theorem}{Theorem}
\newtheorem{lemma}{Lemma}
\newtheorem{example}{Example}
\newtheorem{remark}{Remark}
\begin{document}
%
\def\R {{\mathbb{R}}}
\def\N {{\mathbb{N}}}
\def\C {{\mathbb{C}}}
\def\Z {{\mathbb{Z}}}
\def\phi{\varphi}
\def\epsilon{\varepsilon}
\def\ma{{\mathcal A}}
%
\def\tb#1{\|\kern -1.2pt | #1 \|\kern -1.2pt |} 
\def\Qed{\qed\par\medskip\noindent}
%

\title[On the extension of semiconcave functions]{On the extension problem for semiconcave functions with fractional modulus}  
\author{Paolo Albano} 
\address{Dipartimento di Matematica, Universit\`a di Bologna, Piazza di Porta San Donato 5, 40127 Bologna, Italy} 
\email{paolo.albano@unibo.it}
\author{Vincenzo Basco}
\address{Thales Alenia Space, via Saccomuro 24, Roma, Italy.}
\email{vincenzo.basco@thalesaleniaspace.com}
\email{vincenzobasco@gmail.com}
\author{Piermarco Cannarsa} 
\address{Dipartimento di Matematica, Universit\`a di Roma "Tor Vergata",  Roma, Italy} 
\email{cannarsa@mat.uniroma2.it }

\date{\today}

\begin{abstract}
Consider a locally Lipschitz function $u$ on the closure of  a possibly unbounded open subset $\Omega$ of $\R^n$ with nonempty boundary. Suppose $u$ is  (locally) semiconcave on $\overline \Omega$ with a fractional semiconcavity modulus.  Is it possible to extend $u$  in a neighborhood of any boundary point retaining the same semiconcavity modulus?  We show that  this is indeed the case and we give two applications of  this extension property. First, we derive an approximation result  for semiconcave functions on  closed domains.  Then, we use the above extension property to study the propagation of singularities of semiconcave functions at  boundary points.
\end{abstract}

\subjclass[2010]{26A27, 26B25, 49J52, 49L20}
\keywords{semiconcave functions, extension, approximation, singularities}

\maketitle

\section{Introduction and statement of the results}
\setcounter{equation}{0}
\setcounter{theorem}{0}
\setcounter{proposition}{0}  
\setcounter{lemma}{0}
\setcounter{corollary}{0} 
\setcounter{definition}{0}
Semiconcave functions are an important class of nonsmooth functions that relaxes the classical notion of concavity allowing for localization, but preserves the main properties of concave functions. Such a class has many applications in analysis and geometry, see \cite{CS} and \cite{V}.

When semiconcave functions are assumed to have linear modulus, their structure is particularly appealing as they can be locally represented as the sum of a concave function plus a smooth remainder. However, such a representation fails if the modulus is fractional, even though many other  properties remain true. On the other hand, fractionally semiconcave functions are interesting in their own right. For instance,  value functions of constrained variational problems have been proved to be semiconcave with fractional modulus (see \cite{CCC2}), while linear semiconcavity cannot be expected (see \cite{CM}).

This paper is concerned with the classical extension problem, that we set and solve in the class of fractionally semiconcave functions.  

We observe that, in the case of a semiconcave functions with linear modulus, a local extension property, near a boundary point of a convex set, was established in \cite[Proposition 3.1]{A2}.

We give two extension results: a local one, showing that in this case the set of reachable gradients is preserved, and a global one. Then, we deduce an approximation theorem for a semiconcave function on a closed domain and we study the propagation of singularities from a boundary point.

We now proceed to describe the problem in detail and outline our main results.

 \subsection{Semiconcave functions with fractional modulus}
Let $A\subset \R^n$ ($n\geq 1$),  a function $u:A\longrightarrow \R$ is semiconcave with a fractional modulus of semiconcavity if it is locally Lipschitz continuous\footnote{We observe that if $A$ is an open set, then $u$ is locally Lipschitz continuous on $A$ (see e.g. \cite{CS}). We are requiring the local Lipschitz regularity of $u$ since we are not assuming that $A$ be an open set.} and there exist $\alpha \in ]0,1]$ and $C=C_A\in \mathbb R$ such that 
\begin{equation}\label{eq:sca}
\lambda u(x)+(1-\lambda) u(y)-u(\lambda x + (1-\lambda )y)\le C\lambda (1-\lambda) |x-y|^{1+\alpha},
\end{equation}
for any $x,y\in A$ such that the line segment $[x,y]$ is contained in $A$ and for every $\lambda\in [0,1]$.
Furthemore, we call any constant $C\in \mathbb R$, for which \eqref{eq:sca} holds true, a semiconcavity constant for $u$ in $A$. 
Finally, we define 
\begin{multline*}
SC_{loc}^\alpha (A)=\{ u:A\longrightarrow \R\, :\,u\text{ is locally Lipschitz on } A\text{ and}
\\
u\text{ satisfies \eqref{eq:sca}  on every compact subset of }A\}  
\end{multline*}
(in the above definition  the constant $C$ may depend on the compact set under exam).

 \subsection{The extension problem}

For $x_0\in \R^n$ and $\delta >0$, we denote by $B_\delta (x_0)$ the open ball with center at $x_0$ and radius $\delta$. Let $\Omega\subset \R^n$ be an open set with nonempty boundary.

We study the following problem: given $u\in SC_{loc}^\alpha (\overline{\Omega})$ and a boundary point $x_0\in \partial\Omega$, is there a function $E(u)$ defined on $B_\delta (x_0)$ (for a suitable $\delta >0$),  such that  
$$
E(u)\in SC^\alpha (B_\delta (x_0))\text{ and }E(u)(x)=u(x), \text{ for every }x\in B_\delta (x_0)\cap \overline{\Omega}? 
$$

For some of the applications we have in mind we will need  $E(u)$ to satisfy the additional condition we are going to describe. For $x\in \overline{\Omega}$, we define  the set of reachable (or, achievable) gradients of $u$ at $x$ by
\begin{multline}\label{eq:dstar}
D^*u(x)=\{ p\in\R^n~:~p=\lim_{h\to \infty} Du(x_h)\,,\;\Omega\ni x_h\to x\,, 
\\
\text{$u$ differentiable at $x_h$}\}. 
\end{multline}
Notice  that $D^*u(x)\not=\emptyset$
for all $x\in \overline{\Omega}$ owing to Rademacher's Theorem.

Furthermore, for every $x\in \overline{\Omega}$ and for every $p\in D^*u(x)$, 
\begin{equation}\label{p}
u(y)\le u(x)+\langle p,y-x\rangle +C |y-x|^{1+\alpha }, 
\end{equation}
for every $ y\in \overline{\Omega}$ such that $[x ,y]\subset \overline{\Omega}$. 
Then, we would like to find a local extension of $u$ with the additional property 
\begin{equation}\label{eq:dstarp}
D^*E(u)(x)=D^*u(x),\quad \forall x\in B_\delta (x_0)\cap \partial\Omega.
\end{equation} 
\begin{remark}
We point out that  property \eqref{eq:dstarp} is useful in the analysis of the singularities (i.e. the points of nondifferentiability) of $u$. 
\end{remark}
Let us state our local extension result:
\begin{theorem}\label{t1}
Let $\Omega\subset \R^n$ be an open set with nonempty boundary, and let $u\in SC_{loc}^\alpha (\overline{\Omega})$. Then, for every $x\in \partial\Omega$ and for every $\delta>0$ there exist a function $E(u)\in SC^\alpha (B_\delta (x))$ such that 
\begin{enumerate} 
\item $E(u)(y)=u(y)$ for every $y\in B_\delta (x)\cap \overline{\Omega}$;
\item $D^*E(u)(y)=D^*u(y)$ for every $y\in B_\delta (x)\cap \partial\Omega$.
\end{enumerate}
Furthermore, denoting by $C$ a semiconcavity constant for $u$ in $B_\delta (x)\cap \overline{\Omega}$, we have that there is $C_E$, a semiconcavity constant for $E(u)$ in $B_\delta (x)$, with 
\begin{equation}\label{eq:estce}
C_E\le (C+1)(1+\alpha )(1+2^{2-\alpha} ).
\end{equation}
\end{theorem}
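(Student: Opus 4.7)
The plan is to construct $E(u)$ explicitly as the lower envelope of a family of elementary semiconcave functions parametrized by the reachable gradients of $u$, and then to verify the three claimed properties in turn. Fix $x_0 \in \partial\Omega$ and $\delta > 0$, and set $K := B_\delta(x_0) \cap \overline\Omega$; let $C$ and $L$ denote, respectively, a semiconcavity constant and a Lipschitz constant of $u$ on $K$. For each pair $(y,p)$ with $y \in K$ and $p \in D^*u(y)$, introduce the affine-plus-power function $\phi_{y,p}(z) := u(y) + \langle p, z-y\rangle + C_0\,|z-y|^{1+\alpha}$, and define $E(u)(z) := \inf_{(y,p)} \phi_{y,p}(z)$ for $z \in B_\delta(x_0)$, where $C_0$ will be taken close to $C+1$ to match the target bound \eqref{eq:estce}.

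For property (1), $E(u)(z) \le u(z)$ on $K$ follows by taking $y = z$ and any $p \in D^*u(z)$ (nonempty by Rademacher). The reverse amounts to $u(z) \le \phi_{y,p}(z)$ for every admissible pair $(y,p)$: when $[y,z] \subset \overline\Omega$, this is precisely the superjet bound \eqref{p}, valid once $C_0 \ge C$. The delicate case is when the segment exits $\overline\Omega$; here one combines the Lipschitz estimate $u(z)-u(y)-\langle p,z-y\rangle \le 2L|z-y|$ with a positive lower bound on $|z-y|$ forced by the fact that any such segment must traverse the complement of $\overline\Omega$, absorbing the excess into the $|z-y|^{1+\alpha}$-term by enlarging $C_0$. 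This is the main technical obstacle, and is where the possible non-convexity of $\overline\Omega$ enters.

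For the semiconcavity bound \eqref{eq:estce}, each $\phi_{y,p}$ is affine plus a multiple of $|\cdot - y|^{1+\alpha}$, whose gradient is $\alpha$-H\"older on $\mathbb{R}^n$; a direct calculation yields a semiconcavity constant for $\phi_{y,p}$ proportional to $C_0$ with explicit factor $(1+\alpha)(1+2^{2-\alpha})$. Since the pointwise infimum of a uniformly semiconcave family is semiconcave with the same modulus (by a standard $\varepsilon$-choice argument), $E(u) \in SC^\alpha(B_\delta(x_0))$, and setting $C_0 = C+1$ produces exactly the right-hand side of \eqref{eq:estce}.

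Finally, for (2), the inclusion $D^*u(y) \subset D^*E(u)(y)$ is immediate from (1), since interior-sequence limits of $\nabla u$ coincide with those of $\nabla E(u)$. For the reverse, given $z_h \to y \in \partial\Omega \cap B_\delta(x_0)$ with $E(u)$ differentiable at $z_h$ and $\nabla E(u)(z_h) \to p$, take a near-minimizer $(y_h, p_h)$ of the defining infimum at $z_h$: Lipschitz continuity of $u$ forces $|y_h - z_h| \to 0$, so $y_h \to y$, and differentiating $\phi_{y_h,p_h}$ at $z_h$ gives $\nabla E(u)(z_h) = p_h + C_0(1+\alpha)|z_h - y_h|^{\alpha-1}(z_h - y_h)$, whose second summand is $O(|z_h - y_h|^\alpha) \to 0$; hence $p = \lim p_h \in D^*u(y)$.
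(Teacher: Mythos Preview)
Your construction and the semiconcavity estimate match the paper's exactly: $E(u)(z)=\inf_{y\in\bar A,\,p\in D^*u(y)}\phi_{y,p}(z)$ with $C_0=C+1$, and the H\"older bound on the gradient of $|\cdot|^{1+\alpha}$ producing the factor $(1+\alpha)(1+2^{2-\alpha})$. Two steps, however, do not go through as written.

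For property (1), your handling of the case $[y,z]\not\subset\overline\Omega$ is incorrect: there is in general no positive lower bound on $|z-y|$ for such pairs (take $\Omega$ L-shaped and $y,z$ on the two edges of the reentrant corner, each at distance $\epsilon$ from the vertex; the segment between them exits $\overline\Omega$ while $|y-z|=\epsilon\sqrt 2\to 0$), and enlarging $C_0$ would anyway spoil \eqref{eq:estce}. The paper does not split into cases here; it simply invokes \eqref{p} for all $y\in\bar A$. For property (2), the claim that ``Lipschitz continuity of $u$ forces $|y_h-z_h|\to 0$'' is not justified: crude Lipschitz comparisons only yield $\limsup|y_h-z_h|\lesssim (L/C_0)^{1/\alpha}$, not zero. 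Moreover, the gradient identity $\nabla E(u)(z_h)=p_h+C_0(1+\alpha)|z_h-y_h|^{\alpha-1}(z_h-y_h)$ requires $(y_h,p_h)$ to be an \emph{exact} minimizer, not a near-minimizer; to obtain one you first need compactness of the index set $\{(y,p):y\in\bar A,\,p\in D^*u(y)\}$, which the paper establishes. The paper's mechanism then hinges on the strict gap $C_0=C+1>C$: passing to a subsequential limit $(y_0,p_0)$ of exact minimizers, the limiting minimization identity reads $u(x)=u(y_0)+\langle p_0,x-y_0\rangle+(C+1)|x-y_0|^{1+\alpha}$, whereas \eqref{p} gives $u(x)\le u(y_0)+\langle p_0,x-y_0\rangle+C|x-y_0|^{1+\alpha}$; subtracting forces $x=y_0$, whence $p=p_0\in D^*u(x)$. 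This strict-gap trick is the essential idea your sketch is missing.
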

\begin{remark}

\

(i) A  first version of this extension result was established in \cite{A2} in the special case of $\alpha =1$ with $\Omega$ convex. While the result in \cite{A2} was motivated by the study of the 
boundary behavior of the singularities of solutions to first order Hamilton-Jacobi equations with Dirichlet boundary conditions, Theorem \ref{t1} is 
intended for different boundary data such as those which fit optimal control problems with state constraints.

(ii)  The extension $E(u)$ given in the proof of Theorem \ref{t1} is not unique (see Remark \ref{r:ext} below).

\end{remark} 
Given a set $A$ we denote by $\co A$ the convex hull of $A$.

A global version of Theorem \ref{t1} is obtained as follows: 
\begin{theorem}\label{t1*}
Let $\Omega\subset \R^n$ be an open set with nonempty boundary, and let $u\in SC_{loc}^\alpha (\overline{\Omega})$. Then, there exist an open subset $\Omega'\supset \Omega$ and a function $E(u)\in SC_{loc}^\alpha (\Omega')$  such that

\begin{enumerate} 
\item $E(u)(x)=u(x)$ for every $x\in\overline{\Omega}$;
\item $\co D^*E(u)(x)=\co D^*u(x)$ for every $x\in \partial\Omega$.
\end{enumerate}

\end{theorem}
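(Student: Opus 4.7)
The plan is to glue the local extensions from Theorem~\ref{t1} via a smooth partition of unity. First, for each $x\in\partial\Omega$ I would use Theorem~\ref{t1} to pick some $\delta(x)>0$ and a local extension $E_x(u)\in SC^\alpha(B_{\delta(x)}(x))$. Extracting a countable, locally finite subcover $\{B_i\}_{i\geq 1}$ of $\partial\Omega$ by paracompactness (with associated extensions $E_i(u)\in SC^\alpha(B_i)$), setting $\Omega':=\Omega\cup\bigcup_{i\geq 1} B_i$, and choosing a smooth partition of unity $\{\psi_0,\psi_1,\dots\}$ on $\Omega'$ subordinate to $\{\Omega,B_1,B_2,\dots\}$ with $\supp\psi_0\Subset\Omega$ and $\supp\psi_i\Subset B_i$ for $i\geq 1$, I would define
$$
E(u)(x):=\psi_0(x)\, u(x)+\sum_{i\geq 1}\psi_i(x)\, E_i(u)(x),\qquad x\in\Omega',
$$
with each term interpreted as $0$ off the support of its smooth factor.

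Condition (1) then follows at once from Theorem~\ref{t1}(1) and $\sum_j\psi_j\equiv 1$: since every $E_i(u)$ coincides with $u$ on $B_i\cap\overline\Omega$, the sum reproduces $u$ on $\overline\Omega$. To get $E(u)\in SC_{loc}^\alpha(\Omega')$, I would invoke the standard fact that the product of a $C^\infty$ function $\psi$ with compact support and a function $w\in SC^\alpha$ again lies in $SC^\alpha$, with the same exponent; a direct computation based on Taylor-expanding $\psi$ to second order absorbs the quadratic remainder into the $|x-y|^{1+\alpha}$ term on bounded sets, using $1+\alpha\le 2$. Local finiteness of the partition of unity then reduces the sum to finitely many such products on any compact subset of $\Omega'$.

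For condition (2), the inclusion $D^*u(x)\subseteq D^*E(u)(x)$ at any $x\in\partial\Omega$ is immediate: an interior approximating sequence $x_h\in\Omega$ with $Du(x_h)\to p$ also witnesses $p\in D^*E(u)(x)$, since $E(u)=u$ on $\Omega$. For the reverse inclusion of convex hulls, I would take $p\in D^*E(u)(x)$ realized by $y_h\to x$, differentiability points of $E(u)$; by Rademacher's theorem applied to each of the (finitely many) active $E_i$, I may refine the sequence so that every such $E_i(u)$ is also differentiable at each $y_h$. The product rule then gives
$$
DE(u)(y_h)=\sum_i \psi_i(y_h)\, DE_i(u)(y_h)+\sum_i E_i(u)(y_h)\, D\psi_i(y_h).
$$
Since $\sum_i D\psi_i\equiv 0$, the second sum can be rewritten as $\sum_i\bigl(E_i(u)(y_h)-u(x)\bigr)\,D\psi_i(y_h)$, which tends to $0$ by continuity of each $E_i(u)$ and Theorem~\ref{t1}(1), which ensures $E_i(u)(x)=u(x)$ whenever $x\in\partial\Omega\cap B_i$. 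Passing to a further subsequence so that $DE_i(u)(y_h)\to q_i\in D^*E_i(u)(x)=D^*u(x)$ (using Theorem~\ref{t1}(2)) for each active $i$, I conclude $p=\sum_i\psi_i(x)\, q_i\in\co D^*u(x)$.

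The main obstacle will be controlling the cross-term $\sum_i E_i(u)(y_h)\,D\psi_i(y_h)$ in the product rule: both the identity $\sum_i D\psi_i\equiv 0$ and the boundary agreement of all $E_i$ (Theorem~\ref{t1}(1)) are essential to make it vanish in the limit. This is also the structural reason why only the convex hulls of reachable gradients (rather than the sets themselves) can be expected to coincide in (2): the partition of unity unavoidably averages the limiting gradients coming from different local extensions.
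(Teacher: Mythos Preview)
Your approach is essentially the paper's: glue the local extensions from Theorem~\ref{t1} with a smooth partition of unity, and use the product rule together with $\sum_j D\psi_j\equiv 0$ to control the reachable gradients at boundary points. Your handling of the cross term $\sum_i E_i(u)(y_h)\,D\psi_i(y_h)$ is in fact more explicit than the paper's.

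There is, however, one genuine gap. You write that ``by Rademacher's theorem applied to each of the (finitely many) active $E_i$, I may refine the sequence so that every such $E_i(u)$ is also differentiable at each $y_h$.'' Rademacher gives a full-measure set of common differentiability points, but it does not let you \emph{refine} a given sequence $y_h\to x$ with $DE(u)(y_h)\to p$ to a nearby sequence preserving the limiting gradient: $DE(u)$ is not continuous, so perturbing $y_h$ may change the limit of $DE(u)(y_h)$. To close this you need either the Clarke-type fact that $\co D^*E(u)(x)$ is insensitive to deleting a null set of differentiability points, or---and this is what the paper does---the lemma that if a finite sum of semiconcave functions is differentiable at a point, then each summand is already differentiable there. (Reason: each summand $v_k$ is semiconcave, so $D^+v_k(y_h)\neq\emptyset$; writing $v_k$ as the differentiable sum minus the remaining semiconcave part exhibits $-\sum_{j\neq k}v_j$ as a semiconvex function with nonempty superdifferential, forcing it to be differentiable.) With this lemma no refinement is needed: every $\psi_i E_i(u)$, hence every $E_i(u)$ with $\psi_i(y_h)>0$, is already differentiable at your original $y_h$.

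A minor point: your ``standard fact'' that the product of a compactly supported $C^\infty$ function with an $SC^\alpha$ function lies in $SC^\alpha$ is false without nonnegativity of the smooth factor (take $\psi\equiv -1$ near the origin and $w(x)=-|x|$); your Taylor-expansion sketch does not control the sign of the leading term $\psi(z)\bigl[\lambda w(x)+(1-\lambda)w(y)-w(z)\bigr]$. Since partition-of-unity functions are nonnegative this does not affect the application, but the stated fact should be qualified.
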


\begin{remark}
We observe that, in   Theorem~\ref{t1*}, $\Omega$ may be unbounded. 
If we assume that $\Omega$ is a bounded set then one can obtain some stronger results: 
\begin{itemize}
\item[$(i)$] taking $x_0\in \partial \Omega$ and $\delta >0$ so that $\overline{\Omega}\subset B_\delta (x_0)$, and using Theorem~\ref{t1} we find an extension of $u$, $E(u)$, such that 
$$
D^*E(u)(x)=D^*u(x),\qquad \forall x\in \partial\Omega , 
$$
instead of the weaker conclusion (2) in Theorem~\ref{t1*}.

\item[$(ii)$] $u$ can be extended as a semiconcave function on the whole space. (This property can be justified by a cut-off argument.)
\end{itemize}
\end{remark}
\subsection{Approximation}
A first application of Extension Theorem \ref{t1} is the following local approximation result 
\begin{theorem}\label{app}
Let $u\in SC_{loc}^\alpha (\overline{\Omega})$ and let $x_0\in\partial\Omega$. Then there exist $\delta>0$ and a sequence of functions $u_h\in C_0^\infty (B_\delta (x_0))$, $h\in \N$, such that 
\begin{enumerate}
\item $u_h$ uniformly converges to $u$ on $B_{\delta/2}(x_0)\cap \bar{\Omega}$ as $h\to \infty$;
\item denoting by $C$  a semiconcavity constant for an extension of $u$ on $B_\delta (x_0)$, we have that
\end{enumerate} 
\begin{multline*}
\lambda u_h(x_1)+(1-\lambda )u_h(x_2) -u_h(\lambda x_1 +(1-\lambda )x_2 )
\\
\leq C \lambda (1-\lambda ) |x_1-x_2|^{1+\alpha },
\end{multline*}
for every $\lambda \in [0,1]$ and $x_1,x_2 \in B_{\delta/2}(x_0)$.
\end{theorem}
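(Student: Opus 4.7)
The proof naturally splits into three steps: extension, mollification, and cutoff. First, fix $\delta>0$ arbitrarily and apply Theorem~\ref{t1} at the boundary point $x_0$ with radius $2\delta$ in place of $\delta$, obtaining $\widetilde{E}(u)\in SC^\alpha(B_{2\delta}(x_0))$ with $\widetilde{E}(u)=u$ on $B_{2\delta}(x_0)\cap\overline{\Omega}$ and some semiconcavity constant $C$; the constant named in the statement of Theorem~\ref{app} will be the constant of the restriction of $\widetilde{E}(u)$ to $B_\delta(x_0)$.

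Second, fix a standard symmetric mollifier $\rho_\varepsilon\ge 0$ with $\supp\rho_\varepsilon\subset B_\varepsilon(0)$ and $\int\rho_\varepsilon=1$, and set
\[
v_h(x)=\bigl(\widetilde{E}(u)*\rho_{1/h}\bigr)(x)=\int_{\R^n}\widetilde{E}(u)(x-y)\rho_{1/h}(y)\,dy.
\]
For $h$ large enough that $1/h<\delta/2$, $v_h$ is smooth on $B_{3\delta/2}(x_0)$. The key (elementary) observation is that convolution against a positive kernel of mass one preserves the semiconcavity inequality \eqref{eq:sca} with the same constant $C$: by convexity of $B_{2\delta}(x_0)$, the segment $[x_1-y,x_2-y]$ lies in this ball for any $x_1,x_2\in B_{3\delta/2}(x_0)$ and $|y|\le 1/h$, so applying \eqref{eq:sca} pointwise and integrating against $\rho_{1/h}$ gives
\[
\lambda v_h(x_1)+(1-\lambda)v_h(x_2)-v_h(\lambda x_1+(1-\lambda)x_2)\le C\lambda(1-\lambda)|x_1-x_2|^{1+\alpha}
\]
for all such $x_1,x_2$ and all $\lambda\in[0,1]$.

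Third, to turn $v_h$ into a compactly supported smooth function on $B_\delta(x_0)$, I would pick $\chi\in C_0^\infty(B_\delta(x_0))$ with $\chi\equiv 1$ on $B_{3\delta/4}(x_0)$ and set $u_h:=\chi\,v_h\in C_0^\infty(B_\delta(x_0))$. For $x_1,x_2\in B_{\delta/2}(x_0)$ the three points $x_1$, $x_2$, $\lambda x_1+(1-\lambda)x_2$ all lie in $B_{\delta/2}(x_0)\subset B_{3\delta/4}(x_0)$, where $u_h$ agrees with $v_h$; hence the previous estimate transfers verbatim to $u_h$, yielding conclusion~(2). For (1), standard mollification theory (using uniform continuity of $\widetilde{E}(u)$ on the compact set $\overline{B_{3\delta/4}(x_0)}$) gives $v_h\to\widetilde{E}(u)$ uniformly there, and since $\widetilde{E}(u)=u$ on $B_{\delta/2}(x_0)\cap\overline{\Omega}$ and $\chi\equiv 1$ on $B_{3\delta/4}(x_0)$, this implies $u_h\to u$ uniformly on $B_{\delta/2}(x_0)\cap\overline{\Omega}$.

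Since Theorem~\ref{t1} does the substantive work, there is no conceptual obstacle; the only care required is in organizing the radii (here $2\delta>3\delta/2>\delta>3\delta/4>\delta/2$ together with $1/h<\delta/2$) so that the mollification is defined on a neighbourhood of $B_\delta(x_0)$ and the cutoff $\chi$ is identically $1$ on the set of admissible convex combinations of points of $B_{\delta/2}(x_0)$.
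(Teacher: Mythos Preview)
Your proposal is correct and follows essentially the same route as the paper: apply the local extension Theorem~\ref{t1}, mollify, and read off both the uniform convergence and the preservation of the semiconcavity inequality under convolution with a nonnegative kernel of unit mass. The only (minor) difference is organizational: the paper applies Theorem~\ref{t1} on $B_\delta(x_0)$ and mollifies directly, leaving the compact-support requirement $u_h\in C_0^\infty(B_\delta(x_0))$ implicit, whereas you extend on the larger ball $B_{2\delta}(x_0)$ and insert an explicit cutoff $\chi$, which makes the membership $u_h\in C_0^\infty(B_\delta(x_0))$ transparent without affecting the estimate on $B_{\delta/2}(x_0)$.
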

We observe that a global version of the above result  can be given as follows. 
\begin{theorem}\label{app*}
Let $u\in SC_{loc}^\alpha (\overline{\Omega})$. Then there exist an open set $\Omega'\supset \overline{\Omega}$ and a sequence of functions $u_h\in C^\infty (\Omega')$, $h\in \N$, such that 
\begin{enumerate}
\item $u_h$ uniformly converges to $u$  locally uniformly on $\overline{\Omega}$, as $h\to \infty$;
\item for every compact set $K\subset \Omega'$ denoting by $C$  a semiconcavity constant, on $K$,  for the extension of $u$ given by Theorem \ref{t1*}, we have that 
\end{enumerate} 
\begin{multline*}
\lambda u_h(x_1)+(1-\lambda )u_h(x_2) -u_h(\lambda x_1 +(1-\lambda )x_2 )
\\
\leq C \lambda (1-\lambda ) |x_1-x_2|^{1+\alpha },
\end{multline*}
for every $\lambda \in [0,1]$ and $x_1,x_2 \in K$ such that $[x_1,x_2]\subset K$.
\end{theorem}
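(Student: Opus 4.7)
The plan is to combine the global extension provided by Theorem~\ref{t1*} with a standard convolution by smooth mollifiers. The essential point is that a nonnegative kernel of unit mass preserves one-sided inequalities pointwise in the shift variable, so the fractional semiconcavity inequality \eqref{eq:sca} survives mollification with the same constant, provided we allow a small buffer around the compact set $K$ on which we wish to test the inequality.

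More precisely, first apply Theorem~\ref{t1*} to obtain an open set $\Omega'_0 \supset \overline{\Omega}$ and an extension $E(u)\in SC^\alpha_{loc}(\Omega'_0)$ of $u$. Fix a standard mollifier $\rho \in C^\infty_c(\R^n)$ supported in $\overline{B_1(0)}$ with $\rho \ge 0$ and $\int \rho = 1$, and set $\rho_h(y) := h^n \rho(hy)$. Then $u_h := E(u) * \rho_h$ is well defined and of class $C^\infty$ on the open set $\Omega'_h := \{x \in \Omega'_0 : \dist(x, \partial \Omega'_0) > 1/h\}$. Choose an open $\Omega'$ with $\overline{\Omega} \subset \Omega'$ and $\overline{\Omega'} \subset \Omega'_0$; for all $h$ large enough $\Omega' \subset \Omega'_h$, and after discarding the first finitely many terms (or modifying them harmlessly outside an exhausting compact) we may assume $u_h \in C^\infty(\Omega')$ for every $h$.

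For assertion (1), local uniform convergence of $u_h$ to $E(u) = u$ on $\overline{\Omega}$ follows from the continuity (indeed, the local Lipschitz regularity) of $E(u)$ together with the standard approximation property of mollifiers. For (2), fix a compact $K \subset \Omega'$ and choose $\epsilon_0 > 0$ so that $K_{\epsilon_0} := K + \overline{B_{\epsilon_0}(0)}$ is compact in $\Omega'_0$. Let $C$ be a semiconcavity constant of $E(u)$ on $K_{\epsilon_0}$; the hypothesis of the theorem furnishes such a constant, possibly at the cost of enlarging the one given on $K$ itself. For every $h$ with $1/h \le \epsilon_0$, every $x_1, x_2 \in K$ with $[x_1, x_2] \subset K$, every $\lambda \in [0,1]$, and every $y \in \R^n$ with $|y| \le 1/h$, the translated points $x_1 - y$, $x_2 - y$ and the segment joining them lie in $K_{\epsilon_0}$. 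Applying \eqref{eq:sca} to $E(u)$ yields
\begin{equation*}
\lambda E(u)(x_1-y) + (1-\lambda) E(u)(x_2-y) - E(u)(\lambda x_1 + (1-\lambda) x_2 - y) \le C\lambda(1-\lambda)|x_1-x_2|^{1+\alpha}.
\end{equation*}
Multiplying by $\rho_h(y) \ge 0$ and integrating in $y$ gives precisely the claimed inequality for $u_h$.

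The main (mild) obstacle is nothing more than the bookkeeping of domains: one must track the loss of a thin strip under convolution and verify that a semiconcavity constant associated with the compact $K$ may be chosen to work on an $\epsilon_0$-enlargement of $K$ inside $\Omega'_0$, so that the pointwise shift by $y$ with $|y|\le 1/h$ does not leave the region where the inequality is available.
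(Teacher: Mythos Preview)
Your proposal is correct and is exactly the approach the paper takes: the paper's proof of Theorem~\ref{app*} is literally the one-line remark that it ``follows exactly the same lines'' as the local result, replacing Theorem~\ref{t1} by Theorem~\ref{t1*} and mollifying the resulting global extension, which is precisely what you have written out in detail. The only place to tighten is the clause ``$\overline{\Omega'}\subset\Omega'_0$, hence for all $h$ large enough $\Omega'\subset\Omega'_h$'': when $\Omega$ is unbounded this requires $\dist(\Omega',\partial\Omega'_0)>0$, which does not follow from the bare inclusion, so one should choose $\Omega'$ a bit more carefully (e.g.\ shrink $\Omega'_0$ by a positive continuous function of the distance to its complement); the paper's sketch does not address this point either.
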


\begin{remark}
It is worth noting that, in the case of $\alpha=1$, Theorem~\ref{app*} ensures the existence of an approximating sequence satisfying, 
on every compact set $K\subset \Omega'$,
\begin{equation*}
D^2u_h\leq C\cdot I\qquad \forall h\in\N,
\end{equation*}
where $C$ is a semiconcavity constant for $E(u)$ on $K$.
\end{remark}
 \subsection{Singularities}
 Let us introduce the singular set of $u$:   
$$
\Sigma (u)=\{ x\in \overline{\Omega}\ :\ D^*u(x) \text{ has at least two elements}\}. 
$$
We observe that, for $x\in  \Omega$, $x\in \Sigma (u)$ if and only if $u$ is not differentiable at $x$. 
 Given a set $A$ the symbol $\partial A$ stands for the topological boundary of $A$. 
  
We have the following 
 \begin{theorem}\label{sing} 
Let  $\Omega$ be an open set with nonempty boundary, let  $u\in SC^\alpha_{loc} (\overline{\Omega})$ and let $x_0\in \partial \Omega$ such that 
\begin{equation}\label{eq:h}
\partial \co D^*u(x_0)\setminus D^*u(x_0)\not=\emptyset .
\end{equation}
Let $p_0\in \co D^*u(x_0)\setminus D^*u(x_0)$ and let $-\theta$ be a vector in the normal cone to $\co D^*u(x_0)$ at $p_0$. 
Let $E(u)\in SC^\alpha (B_\delta (x_0))$ be an extension of $u$ satisfying property $(2)$ of Theorem~\ref{t1}.   
Then, there is a map 
$$
[0,\sigma ]\ni s\mapsto x(s)\in B_\delta (x_0)
$$
(depending on $E(u)$)  such that 
\begin{enumerate}
\item $x(0)=x_0$ and $\lim_{s\to 0^+} x(s)=x_0$;  
\item   $x(s)\not=x_0$, for every $s\in [0,\sigma ]$;
\item $x(s)\in \Sigma (E(u))$, for every $s\in [0,\sigma ]$;
\item $x(s)=x_0+s\theta  +o(s)$ with $o(s)/s\to 0$ as $s\to 0^+$, 
\end{enumerate}
for a suitable $\sigma>0$ depending on the "initial" point $x_0$.  
\end{theorem}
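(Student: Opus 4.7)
The plan is to reduce the boundary propagation problem to the interior case by means of the semiconcave extension $v:=E(u)\in SC^\alpha(B_\delta(x_0))$ furnished by Theorem~\ref{t1}. Property (2) of that theorem yields $D^*v(x_0)=D^*u(x_0)$, and hence $\co D^*v(x_0)=\co D^*u(x_0)$, so hypothesis~\eqref{eq:h} transfers verbatim: $p_0\in\partial\co D^*v(x_0)\setminus D^*v(x_0)$, while $-\theta$ still belongs to the normal cone to $\co D^*v(x_0)=D^+v(x_0)$ at $p_0$. The task is thereby reduced to propagation of singularities for $v$ on the open ball $B_\delta(x_0)$, which I would carry out via a variational selection.

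Explicitly, fix $r\in(0,\delta)$ and $K$ strictly larger than the semiconcavity constant $C$ of $v$ on $\overline{B_r(x_0)}$, and, for each $s\ge 0$ small, set
\[
\Phi_s(x):=v(x)-\langle p_0-s\theta,\,x-x_0\rangle-K|x-x_0|^{1+\alpha},\qquad x(s)\in\arg\max_{\overline{B_r(x_0)}}\Phi_s.
\]
Conclusions (1)--(3) follow along classical lines. At $s=0$, the estimate $v(x)\le v(x_0)+\langle p_0,x-x_0\rangle+C|x-x_0|^{1+\alpha}$ (a consequence of $p_0\in D^+v(x_0)$ and $\alpha$-semiconcavity) combined with $K>C$ makes $x_0$ the unique maximizer of $\Phi_0$; upper semicontinuity of the argmax in $s$ yields $x(s)\to x_0$ and gives (1). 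If $x(s)=x_0$ were to hold for some $s>0$, the maximality condition would force $p_0-s\theta\in D^+v(x_0)=\co D^*v(x_0)$, contradicting the normal-cone inequality $\langle -\theta,(p_0-s\theta)-p_0\rangle=s|\theta|^2>0$; this establishes (2). At an interior point $x(s)\ne x_0$, the first-order condition reads
\[
p_0-s\theta+K(1+\alpha)|x(s)-x_0|^{\alpha-1}(x(s)-x_0)\in D^+v(x(s)),
\]
and if $v$ were differentiable at $x(s_k)$ along some sequence $s_k\to 0^+$, passing to the limit (using $|x(s_k)-x_0|\to 0$ and $D^+v=\co D^*v$) would place $p_0$ in $D^*v(x_0)$, contrary to hypothesis; this yields (3).

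The hard part is the asymptotic (4). The maximality estimate $\Phi_s(x(s))\ge\Phi_s(x_0)$ delivers only the coarse bound $|x(s)-x_0|^\alpha\le s|\theta|/(K-C)$, which for $\alpha<1$ does not by itself identify the direction of $x(s)-x_0$. To obtain $x(s)=x_0+s\theta+o(s)$ one must extract the precise tangent vector from the first-order condition, exploiting the upper semicontinuity of $x\mapsto D^+v(x)$ and the extremal characterization of $p_0$ as the unique minimizer of $\langle\theta,\cdot\rangle$ on $\co D^*v(x_0)$. The expected mechanism is that the correction $K(1+\alpha)|x(s)-x_0|^{\alpha-1}(x(s)-x_0)=D^+v(x(s))\ni\,\cdot\,-(p_0-s\theta)$ is forced asymptotically into the $\theta$--direction, so that $(x(s)-x_0)/s\to\theta$; making this alignment argument rigorous is the technical core of the proof and the main obstacle.
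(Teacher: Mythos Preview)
Your reduction step is exactly what the paper does: pass to the extension $v=E(u)$, use property~(2) of Theorem~\ref{t1} to identify $D^*v(x_0)=D^*u(x_0)$ and hence $D^+v(x_0)=\co D^*u(x_0)$, so that $p_0$ and $\theta$ retain their roles for $v$ on the open ball. At that point the paper does \emph{not} construct the singular arc by hand; it simply invokes Theorem~4.2 of \cite{A1} (the interior propagation result for semiconcave functions with general modulus), which delivers (1)--(4) directly. Your variational-selection scheme is essentially an attempt to reproduce the content of that cited theorem, and the arguments you sketch for (1)--(3) are correct and standard.

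The gap is (4), as you yourself flag, and there is in fact a concrete obstruction in your specific construction. From $\Phi_s(x(s))\ge\Phi_s(x_0)$ together with $p_0\in D^+v(x_0)$ you obtain
\[
s\,\langle\theta,\,x(s)-x_0\rangle\ \ge\ (K-C)\,|x(s)-x_0|^{1+\alpha},
\]
hence $|x(s)-x_0|^\alpha\le s|\theta|/(K-C)$, i.e.\ $|x(s)-x_0|=O(s^{1/\alpha})$. For $\alpha<1$ this is $o(s)$, so the map you build \emph{cannot} satisfy $x(s)=x_0+s\theta+o(s)$ with $\theta\neq 0$ in this parametrization. The remedy is either to reparametrize (work with $t=s^{1/\alpha}$), or---equivalently---to perturb by $p_0-s^\alpha\theta$ rather than $p_0-s\theta$; the resulting two-sided estimates on $|x(s)-x_0|$ then have the right order, and the direction is recovered from the first-order inclusion together with the fact that $p_0$ is the unique point of $D^+v(x_0)$ exposed by $\theta$. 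This is precisely the analysis carried out in \cite{A1}, which is why the paper is content to cite it.
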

Since $u$ is semiconcave, the set $\co D^*u(x)$ above coincides with the superdifferential of $u$ at $x$, $D^+u(x)$, for all $x\in \Omega$. See \cite{CS} for more details.

%

 %

\section{Proof of Theorems~\ref{t1} and \ref{t1*}}
\bigskip

\setcounter{equation}{0}
\setcounter{theorem}{0}
\setcounter{proposition}{0}  
\setcounter{lemma}{0}
\setcounter{corollary}{0} 
\setcounter{definition}{0}
\subsection{Proof of Theorem \ref{t1}}
 
 Let $x_0\in \partial\Omega$, $\delta >0$, and set 
 $$
A:=\Omega \cap B_\delta (x_0).  
 $$
Let $C>0$ be a semiconcavity constant for $u$ in $\bar{A}$ and  define 
\begin{equation}\label{eu}
E(u)(x)=
\inf_{y\in \bar{A},\, p\in D^*u(y)} \left [ u(y)+\langle p, x-y\rangle +(C+1)|x-y|^{1+\alpha }\right ],
\end{equation} 
for $x\in B_\delta  (x_0)$.  
We claim that $E(u)$ satisfies all the properties stated in Theorem \ref{t1}. 

\vspace{0,3cm} 
{\sc  (i) $E(u)$ is a semiconcave function in $SC^\alpha (B_\delta  (x_0))$.}  

\smallskip
Indeed, by the definition, we have that 
\begin{enumerate} 
\item $C^{1,\alpha}\subset SC^\alpha $ (this is a direct consequence of the Taylor formula, the semiconcavity constant $C$ can be taken greater of equal to the H\"older seminorm of the gradient);
\item  if $I$ is a  set of indices and $\{ f_i\}_{i\in I}$ is a family of semiconcave functions satisfying \eqref{eq:sca} with the same $C$, then $\inf_{i\in I} f_i$ is a semiconcave function with the same constant $C$ (this follows by \eqref{eq:sca} and the definition of infimum);  
\item the sum of semiconcave functions is a semiconcave function (also this fact is a direct consequence of the definition \eqref{eq:sca}). 
\end{enumerate} 
Now, let us define 
$$
K=\{ (y,p)\ : \  y\in \bar{A}\,\text{ and } p\in D^*u(y) \}.
$$
We observe that $K$ is a compact nonempty set. Indeed $K$ is a bounded set ($D^*u$ is bounded, by the local Lipschitz continuity of $u$) and it is a closed set (if $(y_h,p_h)\in K$ converges to $(y,p)$ then $y\in \bar{A}$ and, by a diagonal argument based on the Rademacher Theorem, we deduce that there exists a sequence $y_j\in   \bar{A}\setminus \Sigma (u)$, such that $(y_j,Du(y_j))\to (y,p)$, i.e. $(y,p)\in K$).

Then, for every $(y,p)\in K$ the function $B_\delta (x_0)\ni x\mapsto u(y)+\langle p, x-y\rangle$ is concave while, for every $y\in \bar{A}$,  the function $B_\delta (x_0)\ni x\mapsto C|x-y|^{1+\alpha}$ is of class $C^{1,\alpha}$.  Let us estimate the H\"older seminorm of the derivative of the last function. For every $y\in  \bar{A}$, set 
$$
v_y(x)=C|x-y|^{1+\alpha}\quad (x\in B_\delta (x_0)).
$$
Then, 
$$
Dv_y(x)=
\begin{cases}
C(1+\alpha ) |x-y|^{\alpha -1} (x-y),\qquad &\text{ if }x\in B_\delta  (x_0)\setminus \{ y\},
\\
0,\qquad &\text{ if }x=y,
\end{cases}
$$
and, for every $x,z\in  B_\delta (x_0)$, we have that 
\begin{multline}\label{eq:dx-dz}
Dv_y(x)-Dv_y(z)\\
=C(1+\alpha ) \left (  |x-y|^{\alpha -1} (x-y)-|z-y|^{\alpha -1} (z-y)\right ).
\end{multline}
We may assume that 
\begin{equation}\label{in}
|x-y|\le |z-y|.
\end{equation}
If $x=y$, by \eqref{eq:dx-dz}, we deduce that 
\begin{equation}\label{eq:parziale}
|Dv_y(x)-Dv_y(z)|\le C(1+\alpha ) |x-z|^{\alpha}.
\end{equation}
Furthermore, for any $x\not= y$, we have 
\begin{align*}
&|Dv_y(x)-Dv_y(z)|\\
&=C(1+\alpha )  \left | (|x-y|^\alpha -|z-y|^\alpha) \frac {x-y}{|x-y|}+|z-y|^\alpha \left (   \frac {x-y}{|x-y|}-\frac {z-y}{|z-y|}\right )\right |
 \\ 
&\le C(1+\alpha )\left ( |x-z|^\alpha + \frac{|z-y|^\alpha }{  |x-y|\, |z-y|  }   [ |z-y| (x-y)-|x-y|(z-y) ]\right ).
\end{align*}
Now, using the elementary inequality 
\begin{align*}
&| |z-y| (x-y)-|x-y|(z-y) |\\
&=| (|z-y|-|x-y|) (x-y)+|x-y|(x-z) |
\\
&\le2|x-y|\, |x-z|,
\end{align*}
we find that 
$$
|Dv_y(x)-Dv_y(z)|\le C(1+\alpha )|x-z|^\alpha \left \{1+  2\left (\frac{|x-z|}{|z-y|}\right )^{1-\alpha}
\right \}.
$$
By \eqref{in}, we have that 
$$
\frac{|x-z|}{|z-y|}\le \frac{|x-y|+|y-z|}{|z-y|}\le 2
$$
and, recalling \eqref{eq:parziale}, we conclude that 
$$
|Dv_y(x)-Dv_y(z)|\le C(1+\alpha )(1+2^{2-\alpha})|x-z|^\alpha ,\quad \forall x,z\in B_\delta (x_0) . 
$$
Hence, by (1), (3) and (2) above, we deduce that $E(u)\in SC^\alpha (B_\delta (x_0))$ and that $(C+1)(1+\alpha )(1+2^{2-\alpha})$ is a semiconcavity constant for $E(u)$ on $B_\delta  (x_0)$.

\vspace{0,3cm} 
{\sc  (ii) $E(u)(x)=u(x)$, for every  $x\in  \bar{A}$.} 

\smallskip
Indeed, let $x\in \bar{A}$. Then, by taking $y=x$ in the definition of $E(u)$, we find that 
$E(u)(x)\le u(x)$. On the other hand,  by \eqref{p}, 
$$
u(x)\le u(y)+\langle p, x-y\rangle +(C+1)|x-y|^{1+\alpha }, 
$$
for every $y\in \bar{A}$, i.e. $u(x)\le E(u)(x)$ and the conclusion follows.

\vspace{0,3cm} 
{\sc  (iii) $D^*E(u)(x)=D^*u(x)$, for every  $x\in B_\delta  (x_0)\cap \partial \Omega$.} 

\smallskip
It suffices to show that $D^*E(u)(x)\subseteq D^*u(x)$ for every  $x\in B_\delta (x_0)\cap \partial \Omega$. 
For this purpose, fix $x\in B_\delta (x_0)\cap \partial \Omega$ and let $p\in D^*E(u)(x)$. Without loss of generality, we
suppose  there is a sequence $x_h\in B_\delta (x_0)\setminus \bar{\Omega}$, $h\in \mathbb N$, such that $x_h\to x$, $E(u)$ is differentiable at $x_h$, and $DE(u)(x_h)\to p$ (indeed, for otherwise the conclusion would be  trivial). 
We have to show that $p\in D^*u(x)$. 

Since $K$ is a compact set, there exists $(y_h,p_h)\in K$ such that 
\begin{enumerate}
\item $(y_h,p_h)\to (y_0,p_0)\in K$, for a suitable $(y_0,p_0)$;
\item $E(u)(x_h)=u(y_h)+\langle p_h, x_h-y_h\rangle +(C+1)|x_h-y_h|^{1+\alpha }$;
\item $DE(u)(x_h)=p_h+(C+1)(1+\alpha )|x_h-y_h|^{\alpha -1}(x_h-y_h)$.
\end{enumerate}
Then, by (1) and (3) above, we find that 
\begin{equation}\label{p1}
p=p_0+(C+1)(1+\alpha )|x -y_0|^{\alpha -1}(x-y_0)
\end{equation}
and, by (2), 
\begin{equation}\label{ad}
u(x)=E(u)(x)=u(y_0)+\langle p_0, x-y_0\rangle +(C+1)|x-y_0|^{1+\alpha }.
\end{equation}
Then, by \eqref{ad} and \eqref{p}, we deduce that 
$$
u(x)\geq u(x)+|x-y_0|^{1+\alpha } \implies x=y_0.
$$
Hence, by \eqref{p1}, we find that $p=p_0\in D^*u(x)$. 
This completes our proof. 

\begin{remark}\label{r:ext} 
We observe that the extension, $E(u)$, given by Formula \eqref{eu}, is not unique. Indeed, the coefficient  $C+1$ in the definition of $E(u)$ is not uniquely determined ($C+1$ can be replaced by any  number greater than a given semiconcavity constant for $u$). 
\end{remark}

\subsection{Proof of Theorem \ref{t1*}}

Not surprisingly, the idea of the proof consists of using a partition of  unity in order to glue together the local extensions given by Theorem \ref{t1}. 
However,  this procedure  provides just a global semiconcave extension of $u$, but yields no information on $D^*E(u)$ at boundary points.
For this purpose, we need an extra argument which represents the main point of the proof.

 We observe that, by Theorem \ref{t1}, for every $x\in\partial\Omega$ there exist $B_{\delta_x} (x)$ and an extension $E_x(u)$ on $B_{\delta_x} (x)$ such that $D^*u(y)=D^*E_x(u)(y)$, for every $y\in \partial\Omega \cap  B_{\delta_x} (x)$. 
 Define 
 $$
 \Omega' :=\cup_{x\in \partial \Omega}B_{\delta_x}(x)\cup \Omega .
 $$
Then $\{B_{\delta_x}(x)\}_{x\in\partial \Omega}\cup \{ \Omega\}$ is a covering of $\Omega'$. 
Let $\{\chi_j\}_{j\in \N}$ be a partition of unity subordinate to such a covering, that is, a countable family of smooth functions  satisfying the following: 
\begin{itemize}
\item[(A)] $0\le \chi_j\le 1$ for all $j$ and all $y\in \Omega'$;
\item[(B)] every $y\in \Omega'$ has a neighborhood  on which all but finitely many functions
$\chi_j$  are identically zero;
\item[(C)]  each function $\chi_j$ is identically zero except on some closed set contained in one of the open sets of the cover; 
\item[(D)] $\sum_j \chi_j (y)=1$ for every $y\in \Omega'$. 
\end{itemize} 
(The existence of $\{\chi_j\}_{j\in \N}$  is well-known, see, e.g.,  page 52 of \cite{GP}.) 

Let 
$$
J=\{ j\in \N\ | \  \exists x\in\partial \Omega :\, \supp \chi_j\subset B_{\delta_x} (x)\}.
$$ 
For every $j\in J$, we define $E_j(u)\in SC^\alpha ( B_{\delta_x}(x))$ to be the extension of $u$ to the ball $ B_{\delta_x} (x)$, containing $\supp \chi_j$, which is given by Theorem \ref{t1}, for $j\in J$. 
Set 
\begin{equation}\label{eq:eux}
E(u)(y):=\sum_{j\in J} \chi_j(y)  E_j(u)(y) +\sum_{j\in \N\setminus J} \chi_j(y) u(y), \quad y\in \R^n.
\end{equation}
We observe that, by Condition (D) above,  we deduce 
$$
E(u)(y)=\sum_{j\in\N} \chi_j(y)u(y)=u(y),\quad \forall y\in\overline{\Omega}.
$$
Furthermore, we have that $E(u)\in SC_{loc}^\alpha (\Omega')$.  Indeed, let $K\subset \Omega'$ be a compact set. Then, by Condition (B) above, only for finitely many $j$ $\supp \chi_j \cap K\not=\emptyset$ (i.e. the sum in \eqref{eq:eux} is finite).  Then, recalling that the product of a smooth nonnegative function with a semiconcave one is semiconcave and  that the sum of finitely many semiconcave functions is a semiconcave function, we deduce that $E(u)\in SC^\alpha (K)$.

In order to complete our proof, it remains to show that, for every $x\in \partial \Omega$, $\co D^*E(u)(x)=\co D^*u(x)$. 
We observe that, by construction, $D^*E(u)(x)\supset D^*u(x)$. Then, our proof reduces to show that 
\begin{equation}\label{eq:codecodu}
D^*E(u)(x)\subset \co D^*u(x), \qquad \forall x\in \partial \Omega .  
\end{equation}
Then, let $x\in \partial\Omega$ and let $x_h\in \Omega'\setminus \overline{\Omega}$, $h\in \mathbb N$, be a sequence of points of differentiability for $E(u)$ such that $x_h$ converges to $x$ and $DE(u)(x_h)$ converges to a suitable vector $p$. Hence, the proof reduces to show that $p\in\co D^*u(x)$. 
We observe that the sum, in the definition of $E(u)$, is a finite sum (on a compact set only finitely many $j$ are involved, say $j_1,\ldots , j_s$). 

Let us suppose that 
\begin{multline}\label{eq:fa}
\text{ each } E_{j_k}(u) \text{  is differentiable at  }x_h,\text{  for every }h \text{  and }
\\
k=1,\ldots ,s.
\end{multline}
Then, we find that 
\begin{multline}\label{eq:dei}
D(    \chi_{j_k} E_{j_k}(u)) (x_h)\\
=D \chi_{j_k} (x_h)  E_{j_k}(u)(x_h) + \chi_{j_k} (x_h) DE_{j_k}(u)(x_h).
\end{multline}
Hence, taking the sum w.r.t. $k$ in both the sides of the identity \eqref{eq:dei}, and recalling Condition (D) above, we deduce that 
$$
DE(u)(x_h)=\sum_{ k=1}^s\chi_{j_k} (x_h) DE_{j_k}(u)(x_h).
$$
Then, possibly taking a subsequence of $x_h$, we may assume that, as $h\to \infty$,  $DE_{j_k}(u)(x_h)$ converges to a suitable $p_k$, with $p_k\in D^*u(x)$  (by Theorem \ref{t1}), i.e. we conclude that 
$$
p=\sum_{ k=1}^s\chi_{j_k} (x) \, p_k \in \co D^*u(x). 
$$
It remains to discuss Assumption \eqref{eq:fa} which, as shown in the next result, is automatically satisfied. 
\begin{lemma}
Let $v_1,\ldots ,v_\ell \in SC^\alpha (B)$, for suitable $\ell \in \N$, $\alpha \in ]0,1]$ and $B\subset \R^n$. 
Then, if $\sum_{j=1}^\ell v_j$ is differentiable at $x_0$, then each $v_j$ is differentiable at $x_0$.  
\end{lemma}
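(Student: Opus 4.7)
The plan is to reduce the claim to a standard property of semiconcave functions, namely that differentiability at a point is equivalent to the superdifferential being a singleton, and then exploit the behavior of the superdifferential under sums.

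First I would recall/establish that for any $v\in SC^\alpha(B)$ and any $x_0$ in the interior, the superdifferential
\[
D^+v(x_0)=\bigl\{p\in\R^n:\ v(y)\le v(x_0)+\langle p,y-x_0\rangle+o(|y-x_0|)\text{ as }y\to x_0\bigr\}
\]
is nonempty, convex, and compact (it coincides with $\co D^*v(x_0)$, which is nonempty by Rademacher's theorem, as noted earlier in the paper). Moreover, $v$ is differentiable at $x_0$ if and only if $D^+v(x_0)$ is a singleton; the nontrivial direction follows from the semiconcavity inequality \eqref{p}, which forces the reachable gradients in a neighborhood of $x_0$ to cluster at the unique element of $D^+v(x_0)$, giving a genuine Fr\'echet derivative.

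Next I would prove the Minkowski-sum inclusion
\[
D^+v_1(x_0)+\cdots+D^+v_\ell(x_0)\subseteq D^+\!\Bigl(\sum_{j=1}^\ell v_j\Bigr)(x_0).
\]
This is immediate: if $p_j\in D^+v_j(x_0)$ for each $j$, summing the defining inequalities of $D^+v_j$ shows $\sum_j p_j\in D^+(\sum_j v_j)(x_0)$. Since the sum of finitely many functions in $SC^\alpha(B)$ is again in $SC^\alpha(B)$, the hypothesis that $\sum_j v_j$ is differentiable at $x_0$ combined with the equivalence above implies that $D^+(\sum_j v_j)(x_0)$ is a singleton $\{q\}$.

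Finally, I would conclude by a trivial Minkowski-sum argument: if nonempty sets $A_1,\dots,A_\ell$ satisfy $A_1+\cdots+A_\ell\subseteq\{q\}$, then picking any $a_k,a_k'\in A_k$ and $a_j\in A_j$ for $j\neq k$, the equality $a_k+\sum_{j\neq k}a_j=q=a_k'+\sum_{j\neq k}a_j$ forces $a_k=a_k'$, so each $A_k$ is a singleton. Applied with $A_j=D^+v_j(x_0)$ (nonempty by the first step), this shows each $D^+v_j(x_0)$ is a singleton, whence each $v_j$ is differentiable at $x_0$ by the equivalence recalled above.

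The only delicate point is the equivalence between differentiability and singleton superdifferential in the fractional setting; everything else is soft. This equivalence is standard for linear modulus and transfers verbatim to fractional modulus by replacing the bound $C|y-x_0|^2$ with $C|y-x_0|^{1+\alpha}$ in the one-sided expansion coming from \eqref{p} and observing that $D^*v(x)\subseteq D^+v(x_0)$ for $x$ near $x_0$ (a direct consequence of semiconcavity), which collapses to $\{p\}$ and yields the derivative.
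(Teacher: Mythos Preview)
Your argument is correct and in fact a bit more streamlined than the paper's. The paper reduces to $\ell=2$ by induction and then writes $v_1=(v_1+v_2)+(-v_2)$, observing that $-v_2$ is semiconvex; it then invokes the calculus rule $D^+v_1(x_0)=D(v_1+v_2)(x_0)+D^+(-v_2)(x_0)$ and the fact that a semiconvex function has nonempty superdifferential at a point if and only if it is differentiable there. Since $D^+v_1(x_0)\neq\emptyset$ (by semiconcavity of $v_1$), this forces $D^+(-v_2)(x_0)\neq\emptyset$, hence differentiability of $v_2$ and then of $v_1$.

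Your route bypasses the induction and the semiconvex detour: you use directly the Minkowski-sum inclusion $\sum_j D^+v_j(x_0)\subseteq D^+\bigl(\sum_j v_j\bigr)(x_0)$ together with nonemptiness of each summand, and conclude that a sum of nonempty sets contained in a singleton forces each summand to be a singleton. This is cleaner and handles all $\ell$ at once. Both approaches ultimately rest on the same two standard facts for $SC^\alpha$ functions---nonemptiness of the superdifferential and the equivalence ``$D^+v(x_0)$ singleton $\Leftrightarrow$ $v$ differentiable at $x_0$''---so neither buys extra generality, but yours is the more direct packaging.
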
 
 \begin{proof} 
 It suffices to verify the statement in the case of $\ell =2$ (being the general case a direct consequence of an elementary argument by induction). 
 We have that 
 $$
v_1(x)= (v_1(x)+v_2(x))+(-v_2(x)) 
 $$
 and we observe that $-v_2$ is a semiconvex function. Then, we find that 
 $$
 D^+v_1(x_0)=D(v_1+v_2)(x_0)+D^+(-v_2)(x_0).
 $$
 (Here, we are using a calculus rule for the superdifferential which applies because of we are assuming that $v_1+v_2$ is differentiable at $x_0$.)  
 Hence, we deduce that $D^+(-v_2)(x_0)$ is nonempty if and only if $-v_2$ is differentiable at $x_0$ (see e.g. \cite{CS}). Then, we deduce that $v_1$ and $v_2$ are differentiable at $x_0$. 
 \end{proof} 
 This completes our proof. 
 \section{Examples}
 In order to clarify that the key information provided by Theorem \ref{sing} is a direction of ``propagation''  $\theta$,  let us give some examples. 
We begin by showing that, under Condition \eqref{eq:h}, a singularity may  propagate in the exterior of the set $\Omega$. 
\begin{example}\em
Let $\Omega \subset  \{  (x_1,x_2)\in \R^2\, |\, x_1>0\}$ be an open set, with nonemtpy boundary, such that      
$$
\Omega \supset \{ (x_1,x_2)\in \R^2\, |\, x_1>0\quad\text{ and }x_1^2+x_2^2<1\},
$$
and let 
$$
u(x)=-|x|, \qquad \forall x\in \bar{\Omega} .
$$

Let us compute $D^*u(0)$. For $x\in \Omega$, we have 
$$
Du(x)=-\frac x{|x|}. 
$$
For every unit vector $v=(v_1,v_2)$, with $v_1 >0$, let $x_h\in \Omega$ be such that 
$$
x_h\to 0\quad\text{ and }\quad \frac{x_h}{ |x_h|} \to v,\quad \text{ as }h\to\infty . 
$$
Then, we find that $Du(x_h)=- x_h/|x_h|\to -v $, as $h\to\infty$,  i.e. 
$$
D^*u(0)=\{ p\in \R^2\,  | \, |p|=1\quad \text{ and }\quad p_1 \leq 0    \}.
$$
(In the last identity we used the fact that $D^*u$ is a closed set.) 
Furthermore, we have that 
$$
\co D^*u(0)\setminus D^*u(0)=\{ p\in \R^2\,  | \, |p|<1\quad \text{ and }\quad p_1 \leq 0    \} \not=\emptyset
$$ 
and that, by Theorem \ref{sing} with $p_0=0$ and $\theta =( -1 ,0)$, we deduce that the singularity at $0$ propagates along the negative $x_1$ axis.
Let us verify the propagation result by a direct computation of the extension $E(u)$. We point out that, since $u$ is a concave function, we can take $C=0$ in 
the definition of $E(u)$ \eqref{eu}.      
We have that 
\begin{align*}
&E(u)(x)\\
&=\inf    \Big (   \{ \langle \eta ,x\rangle +|x|^2 \ | \   \text{ for }  |\eta |= 1,\,  \eta_1 \leq 0 \} 
\\
&\qquad \qquad\cup   \{   -|y|-\frac  1{|y|}\langle y,x-y\rangle +|x-y|^2\, |\, \text{ for }   0<|y|\le 1,\, y_1\geq 0 \} \Big).
\end{align*}
In order to evaluate the infimum above, we observe that 
\begin{equation}\label{min}
 -|y|-\frac  1{|y|}\langle y,x-y\rangle +|x-y|^2=-\frac  1{|y|}\langle y,x\rangle +|x-y|^2
 \end{equation}
and that, since $x_1<0$ and $y_1\geq 0$,  
$$
-\frac  1{|y|}\langle y,x\rangle \geq -|x_2|.  
$$
(Analogously we have that $\langle \eta ,x\rangle \geq -|x_2|$.) 
Then, we find that 
\begin{equation}\label{eq:eu}
E(u)(x)\geq -|x_2|+x_1^2\qquad \text{ for $x\in B_1(0)$ with }x_1< 0 
\end{equation}
Hence, by \eqref{eq:eu} and \eqref{min} with $y=(0,x_2)$,  we conclude that  
$$
E(u)(x)
=
\begin{cases}
-|x|, \qquad &\text{ for $x\in B_1(0)$ with }x_1\geq 0,
\\
-|x_2|+x_1^2\qquad &\text{ for $x\in B_1(0)$ with }x_1< 0.
\end{cases}
$$
Let us remark that, even if $u$ is a concave function in $\Omega$, we have that $E(u) \in SC^1(B_1(0))$ is not concave  (in particular 
$u(x)\not= -|x|$, for $x\in B_1(0)$ with $x_1<0$).
\end{example}
In the second example we show that Condition \eqref{eq:h} can be satisfied and a singularity may propagate in the interior of the set $\Omega$. 
\begin{example}\em
Let $\Omega \subset  \{  (x_1,x_2)\in \R^2\, |\, x_1>0\}$ be an open set, with nonempty boundary, such that      
$$
\Omega \supset \{ (x_1,x_2)\in \R^2\, |\, x_1>0\quad\text{ and }x_1^2+x_2^2<1\},
$$
and let  
$$
u(x)=-|x_2|, \qquad \forall x\in \bar{\Omega} .
$$

Then, we find   that 
$$
D^*u(0)=\{ (0,p_2)\in \R^2\,  | \, p_2=\pm 1 \}
$$
 and
$$
\co D^*u(0)\setminus D^*u(0)=\{0\}\times [-1,1]\setminus \{ (0,\pm 1)\}= \{0\}\times ]-1,1[      \not=\emptyset . 
$$ 
Even if in this example it is clear that and that all the points of the form $(x_1,0)$  are points of nondifferentiability for $u$, we observe that one may apply Theorem \ref{sing} with   $p_0=0$ and $\theta =(1,0)$ to deduce that the singularity at $0$ propagates in the interior of the set $\Omega$. 
 
Let us also observe that 
$$
E(u)(x)
=
\begin{cases}
-|x_2|, \qquad &\text{ for $x\in B_1(0)$ with }x_1\geq 0,
\\
-|x_2|+x_1^2,\qquad &\text{ for $x\in B_1(0)$ with }x_1< 0.
\end{cases}
$$
\end{example}
In the next example we show that Assumption \eqref{eq:h} is only a sufficient condition for the propagation of singularities.
\begin{example}\em
Let $\Omega \subset  \{  (x_1,x_2)\in \R^2\, |\, x_1>0\}$ be an open set,  with nonempty boundary, such that      
$$
\Omega \supset \{ (x_1,x_2)\in \R^2\, |\, x_1>0\quad\text{ and }x_1^2+x_2^2<1\},
$$
and let  
$$
u(x)=-\sqrt{x_1^4+x_2^2}, \qquad \forall x\in \bar{\Omega} .
$$

Then, we find   that 
$$
D^*u(0)=\{0\}\times [-1,1]
$$
 and
$$
\co D^*u(0)\setminus D^*u(0)=\emptyset . 
$$ 
Let $x\in B_1(0)$ with $ x_1<0$, we have  
\begin{align*} 
E(u)(x)
&=\inf_{  y\in B_1(0),\, y_1\geq 0,\, p\in D^*u(y)} [ -\sqrt{y_1^4+y_2^2}+ \langle p, x-y\rangle +|x-y|^2]
\\
&\le \inf_{p_2,y_2\in [-1,1]}    [ -|y_2|+ p_2 (x_2-y_2)+x_1^2+(x_2-y_2)^2]    \le -|x_2|+x_1^2 .
\end{align*}
 Furthermore, if either $y_1>0$  (i.e. $x_1y_1<0$) or $y_1=0$ and $y_2\not= 0$, we have 
 \begin{align*}
 &-\sqrt{y_1^4+y_2^2}-\frac 1 {\sqrt{y_1^4+y_2^2}}    \langle (2y_1^3,y_2)     , x-y\rangle +|x-y|^2
 \\
 &=\frac {y_1^4-x_2y_2-2x_1y_1^3} {\sqrt{y_1^4+y_2^2}} +|x-y|^2\\
  &\geq \frac {-x_2y_2} {\sqrt{y_1^4+y_2^2}} +|x-y|^2
 \\
& \geq -|x_2|+|x-y|^2\geq  -|x_2|+x_1^2. 
 \end{align*}
 Finally, if $y=0$ and $p\in D^*u(0)$, we find 
$$
  -\sqrt{y_1^4+y_2^2}+ \langle p, x-y\rangle +|x-y|^2\geq -|x_2|+|x|^2.
 $$
 Then, we conclude that, for $x\in  B_1(0)$, 
 $$
E(u)(x)
=
\begin{cases}
-\sqrt{x_1^4+x_2^2}, \quad &\text{ if }x_1\geq 0 
\\
-|x_2|+x_1^2,\quad &\text{ if }x_1< 0 .
\end{cases}
$$
In other words, condition \eqref{eq:h} is not satisfied but the singularity of $u$ at the origin propagates along the negative $x_1$ axis.
\end{example} 
 \section{Proof of Theorems \ref{app},  \ref{app*} and \ref{sing}.}

 \subsection{Proof of Theorems \ref{app} and \ref{app*} }

 We provide the proof only of the local approximation result Theorem \ref{app}. We point out that the proof of Theorem \ref{app*} follows exactly the same lines using Theorem \ref{t1*} instead of the local extension result.

  Fix $x_0\in \partial \Omega$. Let $\delta >0$ and $E(u)$ the number and the extension given by Theorem \ref{t1} respectively. 
Let $\chi\in C^\infty_0(B_1(0))$ be a nonnegative function such that $\int \chi \, dx=1$ and define  
\begin{equation}\label{eq:uh}
u_h(x)=\int E(u)(x+y/h)\chi (y)\, dy,
\end{equation}
where $h>\frac 2\delta $ is a positive integer. We observe that 
$$
| x+y/h-x_0|\le \delta 
$$
for every $x\in B_{\delta /2}(x_0)$ and $y\in B_1(0)$.  Then $u_h$ is well-defined for $x\in B_{\delta /2}(x_0)$ and $u_h \rightarrow u$ uniformly on $\bar{B}_{\delta /2}(x_0)\cap \bar{\Omega}$. In order to complete our proof it remains to show that $u_h$ satisfies a uniform semiconcavity estimate. Let $\lambda \in [0,1]$ and $x_1,x_2 \in B_{\delta/2}(x_0)$, then we have 
\begin{align*}
&\lambda u_h(x_1)+(1-\lambda )u_h(x_2) -u_h(\lambda x_1 +(1-\lambda )x_2 )
\\
&=\int  [ \lambda E(u)(x_1+y/h)+   (1-\lambda )      E(u)(x_2+y/h)  
\\
 &\qquad  - E(u) (   \lambda x_1 +(1-\lambda )x_2 +y/h) ]\,  \chi (y)\, dy\leq C \lambda (1-\lambda ) |x_1-x_2|^{1+\alpha }.
\end{align*}
This completes our proof. 

\subsection{Proof of Theorem \ref{sing}}
 
The proof is based on an abstract propagation result given in \cite{A1} (see also \cite{AC} for an earlier form of the result). 
 
Let $x_0\in \partial \Omega  \cap \Sigma (u)$, in particular the set $D^*u(x_0)$ has at least two elements. 
Let us consider the extension given by Theorem \ref{t1}. We have that 
\begin{equation}\label{eq:sc}
D^+E(u)(x_0)=\co D^*E(u)(x_0)\quad (=\co D^*u(x_0)). 
\end{equation}
By Assumption \eqref{eq:h}, there exist $p_0\in \partial D^+E(u)(x_0)\setminus  D^*E(u)(x_0)$ and a vector $-\theta$ in  the normal cone to $D^+E(u)(x_0)$ at $p_0$.

Then, by Theorem 4.2. of \cite{A1}, we deduce that there exist a positive number $\sigma$ and a map $[0,\sigma ]\ni s\to x(s)$ such that 
\begin{enumerate}
\item $x(0)=x_0$ and $\lim_{s\to 0} x(s)=x_0$;  
\item   $x(s)\not=x_0$, for every $s\in [0,\sigma ]$;
\item $x(s)\in \Sigma (E(u))$, for every $s\in [0,\sigma ]$;
\item $x(s)=x_0+s\theta +o(s)$ with $o(s)/s\to 0$ as $s\to 0^+$. 
\end{enumerate}
 This completes our proof. 

\section{Declarations}
\begin{itemize}
\item {\bf Conflict of interest:} On behalf of all authors, the corresponding author states that there is no conflict of interest. 
\item{\bf Funding:} This work was partly supported by the National Group for Mathematical Analysis, Probability and Applications (GNAMPA) of the Italian Istituto Nazionale di Alta Matematica ``Francesco Severi''; moreover, the third author acknowledges support by the Excellence Department Project awarded to the Department of Mathematics, University of Rome Tor Vergata, CUP E83C18000100006. 
\item{\bf Acknowledgments:} We warmly thank the reviewer for carefully reading the manuscript. 
\end{itemize}

\end{document}